\def\Z{\mathbb{Z}}
\def\Q{\mathbb{Q}}
\def\H{\mathbb{H}}
\def\GL{{\rm GL}}
\newcommand{\pfrac}[2]{\left(\frac{#1}{#2}\right)}
\newcommand{\pMatrix}[4]{\left(\begin{matrix}#1 & #2 \\ #3 & #4\end{matrix}\right)}
\renewcommand{\pmatrix}[4]{\left(\begin{smallmatrix}#1 & #2 \\ #3 & #4\end{smallmatrix}\right)}
\renewcommand{\bar}[1]{\overline{#1}}
\newcommand{\floor}[1]{\left\lfloor #1 \right\rfloor}
\newcommand{\tfloor}[1]{\lfloor #1 \rfloor}
\newtheorem{theorem}{Theorem}
\newtheorem{proposition}[theorem]{Proposition}
\theoremstyle{remark}
\newtheorem*{remark}{Remark}
\newtheorem{example}{Example}
\numberwithin{equation}{section}
\begin{document}


\title[Hecke grids and congruences]{Hecke grids and congruences for\\weakly holomorphic modular forms}

\date{\today}

\author{Scott Ahlgren}
\address{Department of Mathematics\\
University of Illinois\\
Urbana, IL 61801} 
\email{sahlgren@illinois.edu} 

\author{Nickolas Andersen}
\address{Department of Mathematics\\
University of Illinois\\
Urbana, IL 61801} 
\email{nandrsn4@illinois.edu}
 
\subjclass[2010]{Primary 11F33}


\begin{abstract}  
Let $U(p)$ denote the Atkin operator of prime index $p$.
 Honda and Kaneko proved infinite families of congruences of the form $f\big|U(p) \equiv 0 \pmod{p}$ for weakly holomorphic modular forms of low weight and level and primes $p$ in certain residue classes, and  conjectured the existence of similar 
 congruences modulo higher powers of $p$.  Partial results on  some of these conjectures were proved recently by Guerzhoy.
We construct infinite families of weakly holomorphic modular forms on the Fricke groups $\Gamma^*(N)$ for $N=1,2,3,4$ and describe explicitly the action of the Hecke algebra on these forms. As a corollary,  we obtain strengthened versions of all of the congruences conjectured by Honda and Kaneko.
\end{abstract}

\thanks{The first author was supported by a grant from the Simons Foundation (\#208525 to Scott Ahlgren).}


\maketitle


\section{Introduction}
For a prime number $p$, let $U(p)$ denote Atkin's operator, which acts on power series via 
\[
	\left(\sum a(n)q^n\right)\big |U(p):=\sum a(pn )q^n.
\]
In recent work, Honda and Kaneko \cite{HK} generalize a theorem of Garthwaite \cite{Garthwaite} in order to establish infinite families of congruences of the form 
\[
	f\big|U(p)\equiv 0\pmod p
\]
for weakly holomorphic modular forms of low weight and level. 
For example,  it is shown that for any prime $p\equiv 1\pmod 3$ and any $k\in\{4, 6, 8, 10, 14\}$ we have
\begin{equation} \label{hk1}
	\frac{E_k(6z)}{\eta^4(6z)}\big |U(p)\equiv 0\pmod p.
\end{equation}
For another example, if $p\equiv 1\pmod 4$,   $k\in\{4, 6\}$, and   $f\in M_k(\Gamma_0(2))$ has $p$-integral Fourier expansion, then it is shown that
\begin{equation} \label{hk2}
	\frac{f(4z)}{\eta^2(4z)\eta^2(8z)}\big |U(p)\equiv 0\pmod p.
\end{equation}
Honda and Kaneko conjecture that these extend to congruences modulo higher powers of $p$. For example,
they conjecture that for any $p\equiv 1\pmod 3$, the congruence \eqref{hk1} can be replaced by
\begin{equation} \label{hk1powers}
	\frac{E_k(6z)}{\eta^4(6z)}\big |U(p^n) \equiv 0\pmod {p^{n(k-3)}}\quad \text{for any $n\geq 1$}.
\end{equation}

In recent work, Guerzhoy \cite{Guerzhoy} studies the conjectures \eqref{hk1powers} using the $p$-adic theory of weak harmonic Maass forms.
In the case when $k=4$, he shows that if $p\equiv 1\pmod 6$, then there exists an integer $A_p$ such that for all $n$ we have
\begin{equation} \label{g1}
	\frac{E_4(6z)}{\eta^4(6z)}\big |U(p^n) \equiv 0\pmod {p^{n-A_p}},
\end{equation}
and that if $p\equiv 5\pmod 6$, then there exists an integer $A_p$ such that for all $n$ we have
\begin{equation} \label{g2}
	\frac{E_4(6z)}{\eta^4(6z)}\big |U(p^n) \equiv 0\pmod {p^{\lfloor\frac n2\rfloor -A_p}}.
\end{equation}

In this paper, we show that the congruences conjectured by Honda and Kaneko  
result from the existence of ``Hecke grids" of 
weakly holomorphic modular forms on Fricke groups.  These are infinite families of forms on which the Hecke algebra acts in a systematic way. These are similar to the well-known grid of Zagier \cite{Zagier:2002} which encodes the traces of singular moduli; a similar Hecke action on this grid \cite{Ahlgren:2012} explains the many congruences among these traces.

Since the congruences are straightforward consequences of identities involving the Hecke operators we will focus here on the identities themselves.
As an example of the results, we consider the case related to \eqref{g1} and \eqref{g2}.  Using Theorem~\ref{thm:level1} below with $k=r=4$, we see that there is an infinite family of forms $F_d(z)\in M_2^!(\Gamma_0(36))$ with $p$-integral coefficients, and with $F_1(z)=\frac{E_4(6z)}{\eta^4(6z)}=\sum a_1(n)q^n$, such that 
\begin{equation}\label{conj1}
	F_1 \big| T(p^{n}) = 
	\begin{cases} 
		p^{n} F_{p^ n } & \text{ if } p^n \equiv 1 \pmod6,\\
		p^{n} F_{p^{n}} + a_{1}(p^n) \eta^4(6z) & \text{ if } p^{n} \equiv 5 \pmod6.
	\end{cases}
\end{equation}
Using relations among the Hecke operators (we sketch the proof in Section~\ref{sec:level-1} below), we conclude that 
\begin{equation}\label{conj2}
	F_1 \big|U(p^ n ) \equiv  
	\begin{cases} 
		0 \pmod {p^n}  & \text{ if } p \equiv 1 \pmod6,\\
	 	0 \pmod {p^{\lfloor \frac n2\rfloor}} & \text{ if } p \equiv 5 \pmod6.
	\end{cases}
\end{equation}
In other words, \eqref{g1} and \eqref{g2} are true with $A_p=0$ for every $n$.

In some cases, \eqref{conj1} and \eqref{conj2} can be strengthened.  For example, if  $G_1(z)=\frac{E_6(6z)}{\eta^4(6z)}$, Theorem~\ref{thm:level1} gives a family $G_d$ with the property that $G_1\big |T(p^n)=p^{3n} G_{p^n}$ for all $p\geq 5$.  We conclude that $G_1\big |U(p^n)\equiv 0\pmod{p^{3n}}$, as shown in \cite{Guerzhoy}. This phenomenon will occur whenever the parameter $\ell$
in Theorem~\ref{thm:level1} is non-zero.

In a similar way, we obtain strengthened versions of the other conjectures in \cite{HK}.
For example, consider the congruence  \eqref{hk2} in the case $k=4$.
Any form $f\in M_4(\Gamma_0(2))$ can be written uniquely as the sum $f=a f^++b f^-$, 
where $f^+(z)=1+48q+\dots$ and $f^{-}(z)=1-80q+\dots$ are eigenforms for the Fricke involution $f(z)\mapsto 2^{-2}z^{-4}f(-1/2z)$.

Define
\[
	F_1^+(z):=\frac{f^-(4z)}{\eta^2(4z)\eta^2(8z)}, \qquad F_1^-(z):=\frac{f^+(4z)}{\eta^2(4z)\eta^2(8z)}=\sum a_1^-(n)q^n.
\]
Using Theorem~\ref{level2thm} below, we conclude that for positive odd $d$ there are $p$-integral forms 
$F_d^{\pm}\in M_2^!(\Gamma_0(32))$ with the following properties:
For all prime powers $p^n$ we have
\begin{equation*}
	F_1^+\big|T(p^n)=p^n F_{p^n}^+.
\end{equation*}
If $p^n\equiv 1\pmod 4$ then 
\begin{equation*}
	F_1^-\big|T(p^n)=p^n F_{p^n}^-.
\end{equation*}
If $p^n\equiv 3\pmod 4$ then
\begin{equation*}
	F_1^-\big|T(p^n)=p^n F_{p^n}^-+ a_1^-(p^n)\cdot \eta^2(4z)\eta^2(8z).
\end{equation*}

We conclude as above that  
\begin{equation}\label{level2pm}
	F_1^\pm \big|U(p^ n ) \equiv  \begin{cases} 0 \pmod {p^n}  \quad&\text{if $p \equiv 1 \pmod4$},\\
	 0 \pmod {p^{\lfloor \frac n2\rfloor}}\quad&\text{if $p \equiv 3 \pmod4$.}
\end{cases}
\end{equation}
For all odd primes $p$, any $f\in M_4(\Gamma_0(2))$ having $p$-integral coefficients is a $p$-integral linear combination of $f^+$ and $f^-$. It follows that \eqref{level2pm} holds for $\frac{f(4z)}{\eta^2(4z)\eta^2(8z)}$; this establishes a stronger version of the conjecture of \cite{HK}. 

The following strengthened versions of these conjectures  for $\Gamma_0(3)$ and $\Gamma_0(4)$ arise from the identities of Theorems \ref{level3thm} and \ref{level4thm} below. Let $p\geq 5$ be prime and let $N\in \{3,4\}$. Suppose that $f\in M_4(\Gamma_0(N))$ has $p$-integral coefficients and define
\begin{gather*}
	H_3(z) := \eta^2(3z) \eta^2(9z) = q-2 q^4-q^7+5 q^{13}+4 q^{16}-7 q^{19}+\cdots, \\
	H_4(z) := \eta^4(6z) = q-4 q^7+2 q^{13}+8 q^{19}-5 q^{25}+\cdots.
\end{gather*}
Then we have
\[
	\frac{f(3z)}{H_N(z)} \equiv 
	\begin{cases}
		0 \pmod{p^n} &\text{ if } p\equiv 1 \pmod{3}, \\
		0 \pmod{p^{\tfloor{\frac n2}}} &\text{ if } p \equiv 2 \pmod{3}.
	\end{cases}
\]
Finally,  we mention that similar results will hold if the initial forms $F_1$ are replaced by other members of the grid.


\section{Preliminaries}
We begin with some brief background and a proposition about the action of the Hecke operators on the spaces in question.
It will be most natural to work with the Fricke groups 
$\Gamma^*(N)$ for $N \in \{1,2,3,4\}$ (see \cite[Section 1.6]{Kohler} for background).
For these levels, the  groups are generated by the translation
\[
	T := \pMatrix{1}{1}{0}{1}
\]
and the Fricke involution
\[
	W_N := \pMatrix{0}{-1}{N}{0}.
\]
Let $k$ be a positive integer. If $\gamma=\pmatrix abcd \in \GL_2^+(\Q)$, define the slash operator $\big|_k$ by
\[
	f \big|_k \gamma := (\det \gamma)^{k/2} (cz+d)^{-k} f\pfrac{az+b}{cz+d}.
\]
Define $\Gamma_0(M,N) := \left\{ \pmatrix abcd \in \Gamma_0(1) : M|c \text{ and } N|b \right\}$. For primes $p$, define the Hecke operator $T_k(p)$ by 
\begin{equation}
	f \big| T_k(p) := f \big| U(p) + p^{k-1} f(pz) = p^{\frac{k}{2}-1} \left( \sum_{\lambda=0}^{p-1} f \big|_k \pMatrix{1}{\lambda}{0}{p} + f \big|_k \pMatrix{p}{0}{0}{1} \right).
\end{equation}
For $(t,p)=1$, define the conjugated operator $T_k^{(t)}(p) := A_t T_k(p) A_t^{-1}$, where 
\[
	A_t := \pMatrix t001.
\]
Then
\begin{equation}
	f \big| T_k^{(t)}(p) := p^{\frac{k}{2}-1} \left( \sum_{\lambda=0}^{p-1} f \big|_k \pMatrix{1}{t\lambda}{0}{p} + f \big|_k \pMatrix{p}{0}{0}{1} \right),
\end{equation}
and if $f=\sum a_f(n) q^{n/t}$, then
\begin{equation} \label{eq:T-q-series}
	f \big| T_k^{(t)}(p) = \sum \left( a_f(pn) + p^{k-1} a_f(n/p) \right) q^{n/t}.
\end{equation}
For prime powers $p^n$ we have $T_k^{(t)}(p^n) = A_t T_k(p^n) A_t^{-1}$ and the recurrence relation
\begin{equation} \label{eq:T-recurrence}
	T_k^{(t)}(p^{n+1}) = T_k^{(t)}(p^n)T_k^{(t)}(p) - p^{k-1}T_k^{(t)}(p^{n-1}).
\end{equation}
We suppress the subscript $k$ when it is clear from context.

We say that $\nu$ is a multiplier system for a group $\Gamma$ if $\nu$ is a character on $\Gamma$ of absolute value $1$ (see \cite[Section 1.4]{Kohler} for details). Then $M_k^!(\Gamma, \nu)$ is the space of holomorphic functions $f$ on $\H$ whose poles are supported at the cusps of $\Gamma$, and which satisfy
\begin{equation}
	f \big|_k \gamma = \nu(\gamma) f
\end{equation}
for all $\gamma \in \Gamma$.

The multiplier system $\nu_\eta$ on $\Gamma^*(1)$ for the Dedekind $\eta$ function
\[
	\eta(z) := q^{\frac{1}{24}} \prod_{n=1}^\infty (1-q^n)
\]
is given by
\begin{equation} \label{eq:eta-mult}
	\nu_\eta\left( \pmatrix abcd \right) = 
	\begin{cases}
		\pfrac{d}{c}^* \exp\left( \frac{2\pi i}{24} \left( (a+d)c-bd(c^2-1)-3c \right) \right) & \text{ if $c$ is odd,} \\
		\pfrac{c}{d}_* \exp\left( \frac{2\pi i}{24} \left( (a+d)c-bd(c^2-1)+3d-3-3cd \right) \right) & \text{ if $c$ is even}
	\end{cases}
\end{equation}
(see \cite[Chapter 4]{Knopp}). The symbols $\pfrac{d}{c}^*$ and $\pfrac{c}{d}_*$ denote extensions of the Jacobi symbol to negative integers, and take the values $\pm 1$.

The following proposition describes the effect of the conjugated Hecke operators $T_k^{(t)}(p^n)$ on these spaces.

\begin{proposition} \label{prop:Tp}
Let $N\in\{1,2,3,4\}$ and suppose that $t$ is a positive integer. Suppose that $\nu$ is a multiplier system on $\Gamma^*(N)$ which takes values among the $2t$-th roots of unity, and that $\nu$ is trivial on $\Gamma_0(Nt,t)$. Then for primes $p \nmid N$ with $p^{2} \equiv 1 \pmod{2t}$, we have
\[
	T_k^{(t)}(p^{n}) : M_k^!(\Gamma^*(N), \nu) \to M_k^!\left(\Gamma^*(N), \nu^{p^{n}}\right).
\]
\end{proposition}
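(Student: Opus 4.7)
The natural strategy is to prove the base case $n=1$ directly and then deduce the general case by induction from the recurrence \eqref{eq:T-recurrence}. For the inductive step, if the base case and the inductive hypothesis both hold for every multiplier system satisfying the stated conditions (and these are preserved under $\nu \mapsto \nu^p$), then $T^{(t)}(p^n) \circ T^{(t)}(p)$ sends $M_k^!(\Gamma^*(N), \nu)$ into $M_k^!(\Gamma^*(N), \nu^{p^{n+1}})$, while $T^{(t)}(p^{n-1})$ sends it into $M_k^!(\Gamma^*(N), \nu^{p^{n-1}})$. For the identity \eqref{eq:T-recurrence} to assemble these two terms into a single form, I need $\nu^{p^{n+1}} = \nu^{p^{n-1}}$; this follows at once from $p^2 \equiv 1 \pmod{2t}$ together with $\nu^{2t} = 1$, since then $\nu^{p^{n-1}(p^2-1)} = 1$.

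For the base case, the transformation law $(f|T^{(t)}(p))|_k \gamma = \nu^p(\gamma) \, f|T^{(t)}(p)$ is multiplicative in $\gamma$, so it suffices to verify it for the generators $T$ and $W_N$ of $\Gamma^*(N)$. Expanding $(f|T^{(t)}(p))|_k \gamma = p^{k/2-1} \sum_R f|_k (R\gamma)$ where $R$ ranges over the $p+1$ Hecke coset representatives appearing in the definition of $T^{(t)}(p)$, a standard matrix computation (using $p \nmid N$ to carry out the relevant inversions mod $p$) produces for each $R$ an element $\gamma_R \in \Gamma^*(N)$ and another representative $R^*$ with $R\gamma = \gamma_R R^*$, the assignment $R \mapsto R^*$ being a bijection of the index set. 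The transformation law then reduces to proving $\nu(\gamma_R) = \nu(\gamma)^p$ for every $R$.

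Since $\nu$ is trivial on $\Gamma_0(Nt, t)$, this last identity will follow from showing $\gamma_R \gamma^{-p} \in \Gamma_0(Nt, t)$. The main obstacle is executing this check: the elements $\gamma_R$ produced by the Hecke rearrangement are not literally powers of $\gamma$, and aligning them with $\gamma^p$ modulo $\Gamma_0(Nt, t)$ requires the congruence $p^2 \equiv 1 \pmod{2t}$ to reconcile the mod-$p$ inverse built into the choice of $R^*$ with the mod-$2t$ data governing $\nu$. The Fricke generator $W_N$ is the most delicate case, since $W_N \notin \SL_2(\Z)$ and the coset bijection has a different structure than for $\gamma = T$; handling it requires either a uniform argument or an explicit verification across $N \in \{1,2,3,4\}$ using formulas of the type \eqref{eq:eta-mult} for the multiplier system.
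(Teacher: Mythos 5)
Your plan coincides with the paper's proof: the same induction on $n$ via the recurrence \eqref{eq:T-recurrence}, using $p^2\equiv 1\pmod{2t}$ together with $\nu^{2t}=1$ to identify $\nu^{p^{n+1}}$ with $\nu^{p^{n-1}}$, and the same base case $n=1$ checked on the generators $T$ and $W_N$ by rearranging the coset representatives and invoking the triviality of $\nu$ on $\Gamma_0(Nt,t)$ to dispose of the resulting matrices $\gamma_R$. The one adjustment you would need when executing the $W_N$ case is that the criterion $\gamma_R\gamma^{-p}\in\Gamma_0(Nt,t)$ cannot be applied literally there (for $N>1$ the matrix $W_N^{1-p}$ is a non-integral scalar); the paper instead factors $\gamma_R=\gamma_\lambda W_N$ with $\gamma_\lambda=\pmatrix{(1+Nt^2\lambda\lambda')/p}{t\lambda}{Nt\lambda'}{p}\in\Gamma_0(Nt,t)$ and then concludes with $\nu(W_N)=\nu^p(W_N)$, which holds because $W_N$ is an involution and $p$ is odd.
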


\begin{proof} We proceed by induction on $n$. For $n=1$, it is enough to show that for each of the two generators $\gamma$ we have
\begin{equation*}
	f \big| T^{(t)}(p) \big|_k \gamma = \nu^p(\gamma) f \big| T^{(t)}(p).
\end{equation*}
 We begin with the translation $T$. We have
\begin{align*}
	f \big| T^{(t)}(p) \big| T
		&= p^{\frac k2-1}\left(\sum_{\lambda=0}^{p-1} f \big|_{k} \pMatrix{1}{t\lambda}{0}{p} \pMatrix 1101 + f \big|_{k} \pMatrix p001 \pMatrix 1101 \right)\\
		&= p^{\frac k2-1}\left(\sum_{\lambda=0}^{p-1} f \big|_{k} \pMatrix 1p01 \pMatrix{1}{t\lambda+1-p^2}{0}{p} + f \big|_{k} \pMatrix 1p01 \pMatrix p001 \right).
\end{align*}
Define $\lambda'$ by $\lambda' \equiv \lambda + (1-p^2)/t \pmod p$ and $0\leq \lambda' \leq p-1$. Then
\begin{align*}
	f \big| T^{(t)}(p) \big| T
		&= p^{\frac k2-1}\left(\sum_{\lambda'=0}^{p-1} \nu^p(T) \, f \big|_{k} \pMatrix{1}{t\lambda'}{0}{p} + \nu^p(T) \, f \big|_{k} \pMatrix p001 \right) \\
		&= \nu^p(T) \, f \big| T_k^{(t)}(p).
\end{align*}

Since conjugation by $W_N$ interchanges $\pmatrix p001$ and $\pmatrix 100p$, we have
\begin{align*}
f \big| T^{(t)}(p) \big|_k W_N
	&= p^{\frac k2-1}\left(\sum_{\lambda=0}^{p-1} f \big|_{k}  \pMatrix{1}{t\lambda}{0}{p} \pMatrix{0}{-1}{N}{0} + f \big|_{k} \pMatrix p001 W_N \right)\\
   	&= p^{\frac k2-1}\left(\sum_{\lambda=1}^{p-1} f \big|_{k}  \pMatrix{Nt\lambda}{-1}{Np}{0} +    f \big|_{k} W_N\pMatrix{p}{0}{0}{1} +  f \big|_{k} W_N \pMatrix{1}{0}{0}{p}   \right).
\end{align*}
Define $\lambda'$ by $Nt^2\lambda\lambda'+1\equiv 0\pmod p$ and $1\leq \lambda' \leq p-1$.
Then 
\[
\pMatrix{Nt\lambda}{-1}{Np}{0}
	=\pMatrix {\frac{1+Nt^2\lambda\lambda'}p}{t\lambda}{Nt\lambda'}{p}W_N\pMatrix{1}{t\lambda'}{0}{p}.
\]
By assumption we have
\[
	\nu \pMatrix {\frac{1+Nt^2\lambda\lambda'}p}{t\lambda}{Nt\lambda'}{p} = 1.
\]
Therefore
\begin{align*}
	f \big| T^{(t)}(p) \big|_k W_N
		&= \nu(W_N) p^{\frac k2-1}\left(\sum_{\lambda'=1}^{p-1} f \big|_{k}  \pMatrix{1}{t\lambda'}{0}{p} +    f \big|_{k} \pMatrix{p}{0}{0}{1} +  f \big|_{k}  \pMatrix{1}{0}{0}{p}   \right) \\
		&= \nu(W_N) f \big| T^{(t)}(p)= \nu^p(W_N) f \big| T^{(t)}(p),
\end{align*}
since $p$ is odd and $W_N$ is an involution.

Suppose that $n\geq1$ and recall the recurrence \eqref{eq:T-recurrence} satisfied by $T^{(t)}(p^{n+1})$. By induction, the form $f\big|T^{(t)}(p^{n})\big|T^{(t)}(p)$ has multiplier system $\nu^{p^{n+1}}$ and the form $f\big|T^{(t)}(p^{n-1})$ has multiplier system $\nu^{p^{n-1}}$. Since the values of $\nu$ are $2t$-th roots of unity and $p^{2}\equiv 1 \pmod{2t}$, these systems are the same. Therefore,
\[
	T_k^{(t)}(p^{n+1}) : M_k^!(\Gamma^*(N), \nu) \to M_k^!\left(\Gamma^*(N), \nu^{p^{n+1}}\right). \qedhere
\]
\end{proof}


\section{Hecke grids on $\Gamma^*(1)$} \label{sec:level-1}

We  construct Hecke grids on $\Gamma^*(1) = \Gamma_0(1)$ which begin with the forms $E_k(z)/\eta^r(z)$ for $k\in \{4,6,8,10,14\}$ and $r\in \{4,8,12,16,20\}$ (similar results hold for all positive integers $r\leq 24$, but to state them would require unwieldy notation).

Let $\nu$ be the multiplier system for $\eta^4(z)$ on $\Gamma^*(1)$. We compute using \eqref{eq:eta-mult} that if $A=\pmatrix{a}{b}{c}{d} \in \Gamma^*(1)$, then
\begin{equation} \label{eq:mult-eta-4}
	\nu(A) = \zeta_6^{(a+d)c-bd(c^2-1)-3c}.
\end{equation}
Here $\zeta_{m}:=e^{2\pi i/m}$.

\begin{theorem} \label{thm:level1}
Suppose that $k\in\{4,6,8,10,14\}$ and that $r \in \{4, 8, 12, 16, 20\}$.
Define $s/t = r/24$ in lowest terms and 
let $\ell \in \{0,1,2\}$ be the unique integer satisfying $12\ell+k-r \in \{0,4,6,8,10,14\}$.

\begin{enumerate}[\textup{(}a\textup{)}]
\item If $d>0$ and $d\equiv s\pmod{t}$ then there exist unique forms
\begin{equation} \label{eq:lev-1-f-d-def-1}
	f_{d} = q^{-d/t} + \sum_{\substack{n>0 \\ n\equiv -s\bmod{t}}} a_{d}(n) q^{n/t} \in M_{k-r/2}^!(\Gamma^*(1),\bar{\nu}^{r/4}).
\end{equation}
\item There exists a unique form
\begin{equation} \label{eq:f-tls-def}
	f_{t\ell-s} = q^{s/t-\ell} + \cdots \in S_{k-r/2}(\Gamma^*(1),\nu^{r/4}).
\end{equation}
Furthermore, if  $d>t\ell-s$ and $d\equiv -s\pmod{t}$ then there exist unique forms
	\begin{equation} \label{eq:lev-1-f-d-def-2}
		f_{d} = q^{-d/t} + \sum_{\substack{n>s-t\ell \\ n\equiv s\bmod{t}}} a_{d}(n) q^{n/t} \in M_{k-r/2}^{!}(\Gamma^*(1), \nu^{r/4}).
	\end{equation}
\item Suppose that $p$ is an odd prime. If $p^{n} \equiv 1 \pmod{t}$ then we have
	\begin{equation} \label{eq:hecke-1}
		f_{s} \big| T^{(t)}(p^{n}) = p^{(k-r/2-1)n} f_{p^{n}s}.
	\end{equation}
If $p^{n} \equiv -1 \pmod{t}$ then we have
	\begin{equation} \label{eq:hecke-2}
		f_{s} \big| T^{(t)}(p^{n}) =
		\begin{cases}
			p^{(k-r/2-1)n} f_{p^{n}s} + a_{s}(p^n) f_{-s} & \text{ if } \ell=0, \\
			p^{(k-r/2-1)n} f_{p^{n}s} & \text{ otherwise}.
		\end{cases}
	\end{equation}
\end{enumerate}
\end{theorem}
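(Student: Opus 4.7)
The plan is to reduce all three parts to standard facts on $\SL_2(\Z)$ by trivializing the multiplier via multiplication by $\eta^r$ (with a $\Delta^\ell$ correction when the resulting weight is nonpositive), and then exploit that $\dim S_{k''}(\SL_2(\Z)) = 0$ for every $k'' \in \{0, 4, 6, 8, 10, 14\}$.

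For (a), the map $f \mapsto \eta^r f$ is an isomorphism $M_{k-r/2}^!(\Gamma^*(1), \bar\nu^{r/4}) \xrightarrow{\sim} M_k^!(\SL_2(\Z))$, which converts the ``leading $q^{-d/t}$ plus intermediate zeros'' condition on $f_d$ into a uniquely prescribed principal part for $g_d := \eta^r f_d$. Since $\dim S_k(\SL_2(\Z)) = 0$ for $k \in \{4,6,8,10,14\}$, the Miller basis $\{h_n = q^{-n} + O(q)\}_{n \geq 0}$ for $M_k^!(\SL_2(\Z))$ exists with no obstruction, and $g_d$ (hence $f_d$) is uniquely determined; in particular $f_s = E_k/\eta^r$. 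For (b), the analogous map $f \mapsto \Delta^\ell \eta^{-r} f$ takes $M_{k-r/2}^!(\Gamma^*(1), \nu^{r/4})$ isomorphically onto $M_{k'}^!(\SL_2(\Z))$ with $k' = k-r+12\ell \in \{0,4,6,8,10,14\}$, and the same Miller basis argument (with $E_0 := 1$) produces the unique $f_d$, whose distinguished member $f_{t\ell-s}$ corresponds to $E_{k'}$.

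For (c), I invoke Proposition~\ref{prop:Tp}, after verifying via \eqref{eq:mult-eta-4} that $\bar\nu^{r/4}$ takes values in the $2t$-th roots of unity and is trivial on $\Gamma_0(t, t)$; this places $T^{(t)}(p^n) f_s$ in $M_{k-r/2}^!(\Gamma^*(1), \bar\nu^{r/4})$ or $M_{k-r/2}^!(\Gamma^*(1), \nu^{r/4})$ according to whether $p^n \equiv 1$ or $-1 \pmod t$. Using \eqref{eq:T-q-series} iterated via \eqref{eq:T-recurrence}, and exploiting that $f_s$ has a unique nonzero Fourier coefficient at a non-positive index (at $m = -s$), I check that in the generic case $p \nmid s$ the only nonzero non-positive-index Fourier coefficient of $T^{(t)}(p^n) f_s$ sits at $m = -p^n s$ with value $p^{n(k-r/2-1)}$. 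Hence $D := T^{(t)}(p^n) f_s - p^{n(k-r/2-1)} f_{p^n s}$ has no principal part. Transporting $D$ through the isomorphism of (a) or (b) embeds it in $M_k(\SL_2(\Z))$ or $M_{k'}(\SL_2(\Z))$, both of which vanish except in the case $(\ell=0,\, p^n \equiv -1 \pmod t)$, where $M_{k'}(\SL_2(\Z))$ is $1$-dimensional and spanned by $E_{k'}$; there $D = c\, f_{-s}$ with $c$ pinned down by comparing the coefficient of $q^{s/t}$ on both sides.

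The main obstacle I anticipate is the degenerate case $p \mid s$, which in our setting can only occur for $p = s = 5$ (i.e., $r = 20$). Here the Hecke formula produces additional nontrivial coefficients in the principal-part range, and the identity must be verified by matching these against the a priori free tail coefficients of $f_{p^n s}$. I expect this to follow from tracking the Hecke action on the Miller basis of $M_{k'}^!(\SL_2(\Z))$ via the $\eta^r$-isomorphism, but the combinatorial bookkeeping is the delicate part.
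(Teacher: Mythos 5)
Your argument for (a) and (b) is the paper's argument: the paper builds the forms $f_d$ by multiplying by powers of the Hauptmodul $j$ (which is exactly the Miller-basis construction after clearing the multiplier by $\eta^r$, resp.\ $\Delta^\ell/\eta^r$) and gets uniqueness from the vanishing of $S_k(\Gamma_0(1))$ and $S_{12\ell+k-r}(\Gamma_0(1))$. Your part (c) is likewise the paper's: Proposition~\ref{prop:Tp} identifies the target multiplier, \eqref{eq:T-q-series} pins down the principal part, and uniqueness from (a)/(b) finishes. So for $p\nmid s$ your proof is correct and coincides with the paper's.

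The one place you punt --- $p=s=5$, i.e.\ $r=20$, $t=6$ --- is a genuine gap, and your proposed repair (absorbing the extra terms into the ``a priori free tail coefficients of $f_{p^ns}$'') does not close it in general. Writing $\kappa=k-10$, the coefficient of $q^{-1/6}$ in $f_5\big|T^{(6)}(5)$ is $a_5(5\cdot(-1))=a_5(-5)=1$, and more generally $f_5\big|T^{(6)}(5^n)$ has principal part $5^{n(\kappa-1)}q^{-5^{n+1}/6}+5^{(n-1)(\kappa-1)}q^{-5^{n-1}/6}$. When $\ell=2$ and $n=1$ the stray term sits at $q^{-1/6}$ with $-1>s-t\ell=-7$, so it can be absorbed into $a_{25}(-1)$ as you hope, and \eqref{eq:hecke-2} survives. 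But when $\ell=1$ (i.e.\ $k\in\{8,14\}$) the tail of $f_{25}$ in \eqref{eq:lev-1-f-d-def-2} begins at $q^{5/6}$, there is no free coefficient at $q^{-1/6}$, and the uniqueness argument instead yields $f_5\big|T(5)=5^{\kappa-1}f_{25}+f_{t\ell-s}$, contradicting \eqref{eq:hecke-2}; for $n\ge2$ the exponent $-5^{n-1}/6$ lies below every permitted tail and an honest extra term $5^{(n-1)(\kappa-1)}f_{5^{n-1}}$ appears in both \eqref{eq:hecke-1} and \eqref{eq:hecke-2}. In fairness, the paper's own proof has the identical blind spot: the displayed claim that $f_s\big|T^{(t)}(p^n)$ equals $p^{(k-r/2-1)n}q^{-p^ns/t}$ plus a tail tacitly assumes $p\nmid s$, and all applications have $s=1$. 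So you should either add the hypothesis $p\nmid s$ (equivalently, exclude $p=5$ when $r=20$) or state the corrected identity; it is not a bookkeeping exercise that rescues the statement as written. A smaller point in the same spirit: Proposition~\ref{prop:Tp} requires $p^2\equiv1\pmod{2t}$, which fails for $p=3$ when $t\in\{3,6\}$, so ``odd prime'' should really be $p\nmid 6t$ there as well.
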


\begin{remark}
An analogue of Theorem \ref{thm:level1} with $1\leq r \leq 23$ is also true, with the following modifications. When $r\equiv 2 \pmod{4}$ the multiplier system of $\eta^r(z)$ includes the character $\pfrac{-1}{\bullet}$, and the case $k-r=12$ needs to be treated separately. When $r$ is odd, one uses the half-integral weight Hecke operators, and there are fewer cases  since $p^{2n} \equiv 1 \pmod{t}$ for all $n$.
\end{remark}

Before proving Theorem \ref{thm:level1}, we sketch the  proof of \eqref{conj2}.
\begin{proof}[Proof of (1.7)]
Note that $F_d(z) = f_d(6z)$ in the notation of Theorem \ref{thm:level1}.  
 We have the relation
\begin{equation} \label{Up-Tp-relation}
	F_1\big|U(p^n) = F_1\big|T(p^n) - \sum_{j=1}^{n} p^{j} F_1\big| U(p^{n-j}) \big| V(p^j).
\end{equation}
	The case $p\equiv 1 \pmod{3}$ follows in a  straightforward way by induction.

Suppose that $p\equiv 2 \pmod{3}$. If $n$ is even then \eqref{eq:hecke-1} gives $F_1\big|T(p^n) \equiv 0 \pmod{p^n}$. If $n$ is odd, induction shows that $a_1(p^n) \equiv 0 \pmod{p^{\frac {n-1}{2}}}$, so that  $F_1\big|T(p^n) \equiv 0 \pmod{p^{\floor{\frac n2}}}$ by \eqref{eq:hecke-2}. Using \eqref{Up-Tp-relation} we conclude that
\begin{align*}
	F_1 \big|U(p^n)  &\equiv 0 \pmod{p^\alpha}
\end{align*}
where $\alpha = \min\left\{\tfloor{\frac n2}, j + \tfloor{\frac {n-j}2}\right\} = \tfloor{\frac n2}$.
\end{proof}

\begin{proof}[Proof of Theorem \ref{thm:level1}]
Let $\Delta(z) := \eta^{24}(z)$ and let $j(z)$ denote the Hauptmodul on $\Gamma_{0}(1)$ given by
\[
	j(z) := \frac{E_{4}^{3}}{\Delta(z)} = q^{-1}+744+196884 q+21493760 q^2+864299970 q^3+\cdots \in M_{0}^{!}(\Gamma_{0}(1)).
\]

(a) Set $f_{s}(z) := E_{k}(z)/\eta^{r}(z)=q^{-s/t}+O(q^{1-s/t})$. For $d>s$ with  $d \equiv s\pmod{t}$ define
\[
	f_{d}(z) := j(z)^{(d-s)/t} f_{d-t}(z) + \sum_{m=2}^{(d-s)/t} c_{m} f_{d-mt}(z),
\]
where the $c_{m}$ are chosen so that $f_{d}(z) = q^{-d/t} + O(q^{1-s/t})$. These forms satisfy the requirements in \eqref{eq:lev-1-f-d-def-1}.
For uniqueness, suppose there are two forms $f_{d}$ and $f_{d}'$ satisfying \eqref{eq:lev-1-f-d-def-1} and define $g(z) := \eta^{r}(z)(f_{d}(z) - f_{d}'(z)) = O(q)$. Then $g(z)$ is in $S_{k}(\Gamma_0(1))$.  Since this space is trivial for $k\in \{4,6,8,10,14\}$, we conclude that $f_{d}=f_{d}'$.

(b) Set 
\[
	f_{t\ell-s} := \frac{E_{12\ell+k-r}(z)}{\Delta^\ell(z)} \eta^r(z) = q^{s/t-\ell} + O(q^{1+s/t-\ell}),
\]
where $E_0(z) := 1$, and set $f_{-s}=0$ if $\ell \neq 0$. For $d>t\ell-s$ with $d\equiv -s\pmod{t}$, define
\[
	f_{d}(z) := j(z)^{(d+s)/t} f_{d-t}(z) + \sum_{m=2}^{(d+s)/t-\ell} c_{m} f_{d-mt}(z),
\]
where the $c_{m}$ are chosen so that $f_{d}(z) = q^{-d/t} + O(q^{1+s/t-\ell})$. If there are two forms $f_{d}$ and $f_{d}'$ which each satisfy \eqref{eq:f-tls-def} or \eqref{eq:lev-1-f-d-def-2} then the form
\[
	g(z) := \Delta^{\ell}(z)\frac{f_{d}(z) - f_{d}'(z)}{\eta^{r}(z)} = O(q)
\]
has trivial multiplier system, so it is an element of $S_{12\ell+k-r}(\Gamma_0(1))$. This space is trivial since $12\ell+k-r\in \{0,4,6,8,10,14\}$, so $f_{d} = f_{d}'$.

(c) Since $rt/24=s\in\Z$ we see from \eqref{eq:mult-eta-4} that the multiplier system $\nu^{r/4}$  is trivial on $\Gamma_0(t,t)$ and takes values which are $t$-th roots of unity.  Therefore Proposition \ref{prop:Tp} gives
\[
	f_{s} \big| T_{k-r/2}^{(t)}(p^n) \in M_{k-r/2}^!(\Gamma^*(1),\bar{\nu}^{p^nr/4}).
\] 
It follows from this and \eqref{eq:T-q-series} that 
if $p^n \equiv 1 \pmod{t}$ then
\[
	f_{s} \big| T^{(t)}(p^{n}) = p^{(k-r/2-1)n} q^{-p^{n}s/t} + O(q^{1-s/t}) \in M_{k-r/2}^{!}(\Gamma^*(1),\bar{\nu}^{r/4}),
\]
while if $p^{n} \equiv -1 \pmod{t}$ then
\[
	f_{s} \big| T^{(t)}(p^{n}) - a_{s}(p^n) f_{-s} = p^{(k-r/2-1)n} q^{-p^{n}s/t} + O(q^{1+s/t-\ell}) \in M_{k-r/2}^{!}(\Gamma^*(1), \nu^{r/4}).
\]
By uniqueness we obtain \eqref{eq:hecke-1} and \eqref{eq:hecke-2}.
\end{proof}

\begin{example}
Computing as described in the proof above with $k=6$ and $r=4$, we obtain
\begin{align*}
	f_{1} &= q^{-\frac{1}{6}}-500 q^{\frac{5}{6}}-18634 q^{\frac{11}{6}}-196520 q^{\frac{17}{6}}-1277535
q^{\frac{23}{6}}-6146028 q^{\frac{29}{6}}+\cdots \\
	f_{7} &= q^{-\frac{7}{6}}-71750 q^{\frac{5}{6}}-86461760 q^{\frac{11}{6}}-13650854021
q^{\frac{17}{6}}-851755409792 q^{\frac{23}{6}}+\cdots \\
	f_{13} &= q^{-\frac{13}{6}}-2401000 q^{\frac{5}{6}}-24581234095 
q^{\frac{11}{6}}-19372032655696 q^{\frac{17}{6}} + \cdots \\
	f_{19} &= q^{-\frac{19}{6}}-44127125 q^{\frac{5}{6}}-2445793637760 
q^{\frac{11}{6}}-6837455343912760 q^{\frac{17}{6}} +\cdots
\end{align*}
and
\begin{align*}
	f_{5} &= q^{-\frac{5}{6}}-4 q^{\frac{1}{6}}-196882 q^{\frac{7}{6}}-42199976 
q^{\frac{13}{6}}-2421343603 q^{\frac{19}{6}}+\cdots \\
	f_{11} &= q^{-\frac{11}{6}}-14 q^{\frac{1}{6}}-22281280 q^{\frac{7}{6}}-40574734265 
q^{\frac{13}{6}}-12603830624640 q^{\frac{19}{6}}+\cdots \\
	f_{17} &= q^{-\frac{17}{6}}-40 q^{\frac{1}{6}}-953031331 q^{\frac{7}{6}}-8662803937424 
q^{\frac{13}{6}}-9545716711560680 q^{\frac{19}{6}}+\cdots \\
	f_{23} &= q^{-\frac{23}{6}}-105 q^{\frac{1}{6}}-24011843968 
q^{\frac{7}{6}}-837470540062104 q^{\frac{13}{6}}-2657886912184060160 
q^{\frac{19}{6}}+\cdots.
\end{align*}
\end{example}


\section{Hecke grids on $\Gamma^*(2)$} \label{sec:level-2}
In this section we construct grids on $\Gamma^*(2)$ which lead to the congruences \eqref{level2pm}.
Let
\begin{equation*}
	h_2(z) := \eta^2(z)\eta^2(2z) = q^\frac14-2 q^\frac54-3 q^\frac94+6 q^\frac{13}4 +\cdots.
\end{equation*}
The grids   begin with forms $f/h_2$, where $f \in M_4(\Gamma_0(2))$. This space is two-dimensional and is spanned by the forms
\begin{gather*}
	F_2^+(z) := \tfrac15\left(4E_4(2z)+E_4(z)\right) = 1+48 q+624 q^2+1344 q^3+\cdots,\\
	F_2^-(z) := \tfrac13\left(4E_4(2z)-E_4(z)\right) = 1-80 q-400 q^2-2240 q^3+\cdots.
\end{gather*}
Here $F_2^+$ and $F_2^-$ are eigenforms of $W_2$ with eigenvalues $\pm 1$, respectively. Since  $M_6(\Gamma_0(2))$ is also two-dimensional, the results in this section have analogues for $k=6$, using the eigenforms
\[
	G_2^\pm(z) := \frac{8E_6(2z) \pm E_6(z)}{8 \pm 1} \in M_6(\Gamma_0(2)).
\]
The details are similar, and are omitted.

Let $\nu_\pm$ denote the multiplier system for $h_2(z)$ on $\Gamma_0(2)$, extended to $\Gamma^*(2)$ via $\nu_\pm(W_2) = \pm 1$. If $\gamma=\pmatrix abcd \in \Gamma_0(2)$, then a computation involving \eqref{eq:eta-mult} gives
\begin{equation}
	\nu_\pm(\gamma) = i^{d(b-c/2)},
\end{equation}
which is trivial on $\Gamma_0(8,4)$.
We have 
\[
	h_2\in S_{2}(\Gamma^*(2), \nu_-).
\]

\begin{theorem}\label{level2thm}
\begin{enumerate}[\textup{(}a\textup{)}]
\item If $d>0$ and $d\equiv 1\pmod4$, then there exist unique forms
\begin{equation} \label{eq:2-d1mod4}
	f_d^{\pm}=q^{-d/4}+\sum_{\substack{n>0 \\ n\equiv 3\bmod4}} a_{d}^\pm(n)q^{n/4}\in  M_2^!(\Gamma^*(2), \overline{\nu}_\pm).
\end{equation}
\item If $d>0$ and $d\equiv 3\pmod4$, then there exist unique forms
\begin{equation} \label{eq:2-d3mod4p}
	f_d^{+}=q^{-d/4}+\sum_{\substack{n>0 \\ n\equiv 1\bmod4}} a_{d}^+(n)q^{n/4}\in M_2^!(\Gamma^*(2), \nu_+)
\end{equation}
and
\begin{equation} \label{eq:2-d3mod4m}
	f_d^{-}=q^{-d/4}+\sum_{\substack{n\geq5 \\ n\equiv 1\bmod4}} a_{d}^-(n)q^{n/4}\in M_2^{!}(\Gamma^*(2), \nu_-).
\end{equation}
\item For all odd prime powers $p^n$ we have 
\begin{equation*}
	f_1^+\big|T^{(4)}(p^n)=p^n f_{p^n}^+.
\end{equation*}
If $p^n\equiv 1\pmod 4$ then 
\begin{equation*}
	f_1^-\big|T^{(4)}(p^n)=p^n f_{p^n}^-.
\end{equation*}
If $p^n\equiv 3\pmod 4$ then
\begin{equation*}
	f_1^-\big|T^{(4)}(p^n)=p^nf_{p^n}^-+ a_1^-(p^n)\cdot h_2.
\end{equation*}
\end{enumerate}
\end{theorem}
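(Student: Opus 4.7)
The plan closely mirrors the proof of Theorem \ref{thm:level1}: produce each grid element explicitly, establish uniqueness through vanishing of a finite-dimensional cusp-form space, then invoke Proposition \ref{prop:Tp} together with uniqueness to read off the Hecke identities.

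For existence in (a) and (b), I would first set $f_1^+ := F_2^-/h_2$ and $f_1^- := F_2^+/h_2$; the $W_2$-eigenvalues $\mp 1$ of $F_2^\mp$ combined with $\nu_-(W_2) = -1$ produce the multipliers $\bar\nu_\pm$ required by \eqref{eq:2-d1mod4}. For the seed of the other residue class, I would take $f_3^+ := F_2^+ F_2^-/h_2^3$ (the $W_2$-eigenvalues combine to $(+1)(-1)/(-1)^3 = +1$, giving multiplier $\nu_+$), and for $f_3^-$ an appropriate linear combination of $(F_2^\pm)^2/h_2^3$ corrected by a scalar multiple of $h_2$ chosen so that the $q^{1/4}$ coefficient vanishes as required by \eqref{eq:2-d3mod4m}. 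For larger $d$ in each residue class I would multiply by a Hauptmodul $j_2^*(z) = q^{-1} + O(1) \in M_0^!(\Gamma^*(2))$ of the Fricke group and Gauss-eliminate against previously constructed $f_{d'}^\pm$ (and, in the $\nu_-$ case, against $h_2$) to realize the prescribed principal parts and Fourier supports.

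For uniqueness, the difference $g$ of two candidates lies in a cusp-form space that I want to show is trivial. When $d \equiv 1 \pmod 4$, one has $g \in S_2(\Gamma^*(2), \bar\nu_\pm)$ with $g = O(q^{3/4})$; then $g \cdot h_2$ is a weight-$4$ cusp form on $\Gamma_0(2)$ (the multipliers cancel on $\Gamma_0(2)$ since $\bar\nu \cdot \nu = 1$) with $W_2$-eigenvalue $\mp 1$, forcing $g h_2 = 0$ because $S_4(\Gamma_0(2)) = 0$. When $d \equiv 3 \pmod 4$, the product $g \cdot h_2$ carries the nontrivial multiplier $\nu^2$ on $\Gamma_0(2)$, so I would instead divide: $g/h_2$ is a weight-$0$ modular function, holomorphic on $\mathbb{H}$ (since $h_2$ has no zeros there) and at the cusps, living on $X_0(2)$ in the $\nu_+$ case (with $W_2$-antiinvariance) or on the Fricke quotient $X^*(2)$ in the $\nu_-$ case. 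Both are compact Riemann surfaces of genus zero, so $g/h_2$ is a constant, which the antiinvariance (first case) or the stronger vanishing $g = O(q^{5/4})$ at the unique cusp (second case) forces to be $0$; hence $g = 0$.

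For (c), I would apply Proposition \ref{prop:Tp} with $N = 2$ and $t = 4$. The hypotheses hold because $\nu_\pm$ takes values in fourth (hence $2t = 8$th) roots of unity, is trivial on $\Gamma_0(8,4)$, and $p^2 \equiv 1 \pmod 8$ for every odd prime. Thus
\[
	f_1^\pm \big| T^{(4)}(p^n) \in M_2^!\bigl(\Gamma^*(2),\, \bar\nu_\pm^{p^n}\bigr),
\]
with image multiplier equal to $\bar\nu_\pm$ when $p^n \equiv 1 \pmod 4$ and to $\nu_\pm$ when $p^n \equiv 3 \pmod 4$. Using \eqref{eq:T-q-series} inductively via \eqref{eq:T-recurrence}, the image has principal part $p^n q^{-p^n/4}$ and its positive-exponent Fourier support lies exactly in the residue class modulo $4$ dictated by the new multiplier. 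In every case except $(f_1^-,\, p^n \equiv 3 \pmod 4)$, this matches the template for $p^n f_{p^n}^\pm$ on the nose, so uniqueness from (a) and (b) delivers the identity. In the exceptional case the image carries a $q^{1/4}$ coefficient equal to $a_1^-(p^n)$ by the standard Hecke coefficient formula; subtracting $a_1^-(p^n) h_2 \in S_2(\Gamma^*(2), \nu_-)$ cancels that term precisely, after which uniqueness identifies the remainder with $p^n f_{p^n}^-$. The principal obstacle is the simultaneous bookkeeping of the multiplier $\bar\nu_\pm^{p^n}$ and the corresponding Fourier-support residue class; the condition $p^2 \equiv 1 \pmod{2t}$ in Proposition \ref{prop:Tp} is tailored exactly to keep those aligned so that the Hecke image lies in the correct target space.
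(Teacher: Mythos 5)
Your proposal is correct and follows essentially the same route as the paper: the same seeds $f_1^\pm = F_2^\mp/h_2$ and $f_3^+ = F_2^+F_2^-/h_2^3$, construction of the higher $f_d^\pm$ via the Hauptmodul of $\Gamma^*(2)$, uniqueness by reducing to the triviality of a weight-$4$ cusp-form space on $\Gamma_0(2)$, and part (c) from Proposition \ref{prop:Tp} plus \eqref{eq:T-q-series} and uniqueness, with the single correction term $a_1^-(p^n)h_2$ in the exceptional case. The only cosmetic differences are that the paper seeds the $\nu_-$, $d\equiv 3\pmod 4$ grid with $f_{-1}^-=h_2$ rather than building $f_3^-$ directly, and your genus-zero argument for uniqueness when $d\equiv 3\pmod 4$ is a valid variant of the division-by-$h_2$ step the paper leaves implicit.
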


\begin{proof}
 Let $j_2(z)$ denote the Hauptmodul on $\Gamma^*(2)$ given by
\[
	j_2(z) := \frac{\Delta(z)}{\Delta(2z)}+24+2^{12}\frac{\Delta(2z)}{\Delta(z)} = \frac{1}{q}+4372 q+96256 q^2+\cdots \in M_0^!(\Gamma^{*}(2)).
\]

Since $h_2$ has eigenvalue $-1$ under $W_2$, we define
\begin{gather*}
	f_1^+ := \frac{F_2^-}{h_2} = q^{-\frac{1}{4}}-78 q^{\frac{3}{4}}-553 q^{\frac{7}{4}}-3586 q^{\frac{11}{4}}-11325 q^{\frac{15}{4}}+\cdots \in M_2^!(\Gamma^*(2),\bar{\nu}_+), \\
	f_1^- := \frac{F_2^+}{h_2} = q^{-\frac{1}{4}}+50 q^{\frac{3}{4}}+727 q^{\frac{7}{4}}+2942 q^{\frac{11}{4}}+12995 q^{\frac{15}{4}}+\cdots \in M_2^!(\Gamma^*(2),\bar{\nu}_-).
\end{gather*}
For $d\equiv 1 \pmod 4$ we can construct $f_d^+$ satisfying  \eqref{eq:2-d1mod4} as a linear combination of $f_{d-4}\cdot j_2$ and $f_{d-4}, \dots, f_{1}$.   To prove uniqueness, suppose that $f_d^+$ and $g_d^+$ are two forms with these properties. 
Let $\omega_-$ be the multiplier system on $\Gamma^*(2)$ which maps $T$ to $1$ and $W_2$ to $-1$. Then 
\[
	h_2(z)(f_d^+(z)-g_d^+(z))=O(q)\in M_4^!(\Gamma^*(2), \omega_-).
\]
Since there is only one cusp, this is in fact a cusp form, and is therefore equal to zero.

The remaining forms are constructed in similar fashion.  When $d\equiv 3\pmod 4$, we begin with the forms
\begin{gather*}
	f_3^+ := \frac{F_2^+F_2^-}{h_2^3} = q^{-\frac34}-26 q^\frac14-3775 q^\frac54-92634 q^\frac94+\dots \in M_2^!(\Gamma^*(2),\nu_+),\\
	f_{-1}^- := h_2 = q^\frac14-2 q^\frac54-3 q^\frac94+6 q^\frac{13}4 +\dots \in M_2^!(\Gamma^*(2),\nu_-).
\end{gather*}
We conclude the proof by applying \eqref{eq:T-q-series} and Proposition \ref{prop:Tp} to the forms $f_1^\pm$ to obtain the equalities listed in (c).
\end{proof}

\begin{example}
We have
\begin{align*}
	f_{1}^+ &= q^{-\frac{1}{4}}-78 q^{\frac{3}{4}}-553 q^{\frac{7}{4}}-3586 q^{\frac{11}{4}}-11325 
q^{\frac{15}{4}}+\cdots\\
	f_{5}^+ &= q^{-\frac{5}{4}}-2265 q^{\frac{3}{4}}-291480 q^{\frac{7}{4}}-8976715 
q^{\frac{11}{4}}-155852328 q^{\frac{15}{4}}+\cdots\\
	f_{9}^+ &= q^{-\frac{9}{4}}-30878 q^{\frac{3}{4}}-16474122 q^{\frac{7}{4}}-1629968274 
q^{\frac{11}{4}}-71856917725 q^{\frac{15}{4}}+\cdots\\
	f_{13}^+ &= q^{-\frac{13}{4}}-232056 q^{\frac{3}{4}}-443763544 q^{\frac{7}{4}}-107298900269 
q^{\frac{11}{4}}-10015296762600 q^{\frac{15}{4}}+\cdots
\end{align*}
and
\begin{align*}
	f_{3}^+ &= q^{-\frac{3}{4}}-26q^{\frac{1}{4}}-3775 q^{\frac{5}{4}}-92634 q^{\frac{9}{4}}-1005576 
q^{\frac{13}{4}}-8083772 q^{\frac{17}{4}}+\cdots\\
	f_{7}^+ &= q^{-\frac{7}{4}}-79q^{\frac{1}{4}}-208200 q^{\frac{5}{4}}-21181014 
q^{\frac{9}{4}}-824132296 q^{\frac{13}{4}}+\cdots\\
	f_{11}^+ &= q^{-\frac{11}{4}}-326q^{\frac{1}{4}}-4080325 q^{\frac{5}{4}}-1333610406 
q^{\frac{9}{4}}-126807791227 q^{\frac{13}{4}}+\cdots\\
	f_{15}^+ &= q^{-\frac{15}{4}}-755q^{\frac{1}{4}}-51950776 q^{\frac{5}{4}}-43114150635 
q^{\frac{9}{4}}-8679923860920 q^{\frac{13}{4}}+\cdots,
\end{align*}
as well as
\begin{align*}
	f_{1}^- &= q^{-\frac{1}{4}}+50 q^{\frac{3}{4}}+727 q^{\frac{7}{4}}+2942 q^{\frac{11}{4}}+12995 
q^{\frac{15}{4}}+\cdots\\
	f_{5}^- &= q^{-\frac{5}{4}}+2599 q^{\frac{3}{4}}+281448 q^{\frac{7}{4}}+9097141 
q^{\frac{11}{4}}+154926040 q^{\frac{15}{4}}+\cdots\\
	f_{9}^- &= q^{-\frac{9}{4}}+29154 q^{\frac{3}{4}}+16632054 q^{\frac{7}{4}}+1625776110 
q^{\frac{11}{4}}+71919500835 q^{\frac{15}{4}}+\cdots\\
	f_{13}^- &= q^{-\frac{13}{4}}+238728 q^{\frac{3}{4}}+442272424 q^{\frac{7}{4}}+107373859795 
q^{\frac{11}{4}}+10013399068440 q^{\frac{15}{4}}+\cdots
\end{align*}
and
\begin{align*}
	f_{-1}^- &= q^{\frac{1}{4}}-2 q^{\frac{5}{4}}-3 q^{\frac{9}{4}}+6 q^{\frac{13}{4}}+2 
q^{\frac{17}{4}}+\cdots\\
	f_{3}^- &= q^{-\frac{3}{4}} + 4365 q^{\frac{5}{4}}+87512 q^{\frac{9}{4}}+1034388 q^{\frac{13}{4}}+7956216 
q^{\frac{17}{4}}+\cdots\\
	f_{7}^- &= q^{-\frac{7}{4}} + 201242 q^{\frac{5}{4}}+21384381 q^{\frac{9}{4}}+821362450 
q^{\frac{13}{4}}+18482815673 q^{\frac{17}{4}}+\cdots\\
	f_{11}^- &= q^{-\frac{11}{4}} + 4135599 q^{\frac{5}{4}}+1330181256 q^{\frac{9}{4}}+126896378153 
q^{\frac{13}{4}}+6154813925224 q^{\frac{17}{4}}+\cdots .
\end{align*}
\end{example}


\section{Hecke grids on $\Gamma^*(3)$} \label{sec:level-3}

Let
\begin{equation*}\label{def-h3}
	h_3(z):=\eta^2(z)\eta^2(3z) = q^{\frac13}-2 q^{\frac43}-q^{\frac73}+5 q^{\frac{13}3}+ \cdots.
\end{equation*}
We construct  grids on $\Gamma^*(3)$ starting with the forms $f/h_3$, where $f\in M_4(\Gamma_0(3))$. This space is two-dimensional, spanned by the $W_3$-eigenforms
\begin{gather*}
	F_3^+(z) := \tfrac{1}{10}(9E_4(3z) + E_4(z)) = 1+24 q+216 q^2+888 q^3+1752 q^4+\cdots, \\
	F_3^-(z) := \tfrac{1}{8}(9E_4(3z) - E_4(z)) = 1-30 q-270 q^2-570 q^3-2190 q^4+\cdots.
\end{gather*}

Let $\nu_\pm$ denote the multiplier system of $h_3(z)$ on $\Gamma_0(3)$, extended to $\Gamma^*(3)$ via $\nu_\pm(W_3) = \pm 1$. Using \eqref{eq:eta-mult}, we see that if $\gamma = \pmatrix abcd \in \Gamma_0(3)$, we have
\begin{equation}	\label{def-mult-3}
	\nu_\pm(\gamma) = \zeta_3^{\frac c3(a+d)+bd},
\end{equation}
which is trivial on $\Gamma_0(9,3)$.

\begin{theorem} \label{level3thm}
\begin{enumerate}[\textup{(}a\textup{)}]
\item If $d>0$ and $d\equiv 1\pmod3$, then there exist unique forms
\begin{equation*}
	f_d^{\pm}=q^{-d/3}+\sum_{\substack{n>0 \\ n\equiv 2\bmod3}} a_{d}^\pm(n)q^{n/3}\in  M_2^{!}(\Gamma^*(3), \bar{\nu}_\pm).
\end{equation*}
\item If $d>0$ and $d\equiv 2\pmod3$, then there exist unique forms
\begin{equation*}
	f_d^{+}=q^{-d/3}+\sum_{\substack{n>0 \\ \equiv 1\bmod3}} a_{d}^+(n)q^{n/3}\in M_2^{!}(\Gamma^*(3), \nu_+)
\end{equation*}
and
\begin{equation*}
	f_d^{-}=q^{-d/3}+\sum_{\substack{n\geq4 \\ n\equiv 1\bmod3}} a_{d}^-(n)q^{n/3}\in M_2^{!}(\Gamma^*(3), \nu_-).
\end{equation*}
\item Suppose $p\geq 5$ is prime. We have 
\begin{equation*}
	f_1^+\big|T^{(3)}(p^n)=p^n f_{p^n}^+.
\end{equation*}
If $p^n\equiv 1\pmod 3$ then 
\begin{equation*}
	f_1^-\big|T^{(3)}(p^n)=p^n f_{p^n}^-.
\end{equation*}
If $p^n\equiv 2\pmod 3$ then
\begin{equation*}
	f_1^-\big|T^{(3)}(p^n)=p^nf_{p^n}^-+a_1^-(p^n)\cdot h_3.
\end{equation*}
\end{enumerate}
\end{theorem}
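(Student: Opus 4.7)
The strategy closely parallels those of Theorems \ref{thm:level1} and \ref{level2thm}. First I would fix the Hauptmodul $j_{3} \in M_{0}^{!}(\Gamma^{*}(3))$ with principal part $q^{-1}$, and verify, using \eqref{eq:eta-mult} together with the standard transformation $\eta(-1/\tau) = \sqrt{-i\tau}\,\eta(\tau)$, that $h_{3} \in S_{2}(\Gamma^{*}(3), \nu_{-})$ has $W_{3}$-eigenvalue $-1$. This forces
\[
f_{1}^{+} := \frac{F_{3}^{-}}{h_{3}} = q^{-1/3} + \cdots \in M_{2}^{!}\bigl(\Gamma^{*}(3),\bar{\nu}_{+}\bigr), \qquad f_{1}^{-} := \frac{F_{3}^{+}}{h_{3}} = q^{-1/3} + \cdots \in M_{2}^{!}\bigl(\Gamma^{*}(3),\bar{\nu}_{-}\bigr),
\]
which serve as the seeds for part (a).

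For $d \equiv 1\pmod 3$ with $d > 1$, I would build $f_{d}^{\pm}$ inductively as a linear combination of $j_{3}\cdot f_{d-3}^{\pm}$ and $f_{d-3}^{\pm}, \ldots, f_{1}^{\pm}$ with coefficients chosen to produce the expansion $q^{-d/3} + O(q^{2/3})$. For part (b) the seeds would be
\[
f_{2}^{+} := \frac{F_{3}^{+} F_{3}^{-}}{h_{3}^{3}} \in M_{2}^{!}\bigl(\Gamma^{*}(3),\nu_{+}\bigr), \qquad f_{-1}^{-} := h_{3} \in M_{2}^{!}\bigl(\Gamma^{*}(3),\nu_{-}\bigr),
\]
together with an $f_{2}^{-} = q^{-2/3} + O(q^{4/3}) \in M_{2}^{!}(\Gamma^{*}(3),\nu_{-})$ obtained by using a multiple of $h_{3}$ to cancel the $q^{1/3}$ term of $j_{3}\cdot h_{3}$; all further forms in (b) are then produced by the same $j_{3}$-iteration.

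Uniqueness follows the familiar pattern. Two candidates for $f_{d}^{\pm}$ in (a) differ by a form whose product with $h_{3}$ is an $O(q)$ element of $S_{4}(\Gamma^{*}(3),\omega_{\mp})$, where $\omega_{\mp}$ is trivial on $\Gamma_{0}(3)$ and sends $W_{3}$ to $\mp 1$; both $W_{3}$-eigenspaces vanish since $\dim S_{4}(\Gamma_{0}(3)) = 0$. Two candidates in (b) differ by a form whose product with $h_{3}^{2}$ is an integer-$q$-expansion cusp form in $S_{6}(\Gamma^{*}(3),\omega_{\pm})$. The $+1$-eigenspace of $S_{6}(\Gamma_{0}(3))$ is trivial, handling the $f_{d}^{+}$ case; the $-1$-eigenspace is one-dimensional, spanned by $h_{3}^{3} = q + O(q^{2})$, but since the difference of two candidates for $f_{d}^{-}$ has no $q^{1/3}$ term the product is $O(q^{2})$ and must therefore vanish.

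For (c), the multiplier $\nu_{\pm}$ takes values in the third roots of unity and is trivial on $\Gamma_{0}(9,3)$, while $p^{2} \equiv 1 \pmod 6$ for every prime $p \geq 5$. Proposition \ref{prop:Tp} with $t = 3$ therefore places $f_{1}^{\pm}\big|T^{(3)}(p^{n})$ in $M_{2}^{!}\bigl(\Gamma^{*}(3),\bar{\nu}_{\pm}^{\,p^{n}}\bigr)$. Combining this with \eqref{eq:T-q-series} and the uniqueness from (a) and (b), the three identities follow by comparing $q$-expansions. When $p^{n} \equiv 1 \pmod 3$ the multiplier is unchanged and the image matches the $f_{p^{n}}^{\pm}$ from (a); when $p^{n} \equiv 2 \pmod 3$ the multiplier flips to $\nu_{\pm}$, landing in the space of (b). In this subcase $f_{1}^{+}$ requires no correction because $f_{p^{n}}^{+}$ itself allows a $q^{1/3}$ coefficient, whereas $f_{1}^{-}$ must be corrected by $a_{1}^{-}(p^{n})\cdot h_{3}$ to absorb the $q^{1/3}$ term that is disallowed in $f_{p^{n}}^{-}$. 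I expect the main point requiring care to be the bookkeeping of multipliers at the seeds, especially the $W_{3}$-eigenvalue computation for $h_{3}$; thereafter the proof is routine along the template of Sections \ref{sec:level-1}--\ref{sec:level-2}.
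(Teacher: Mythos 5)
Most of your outline coincides with the paper's proof: the seeds $f_1^{\pm}=F_3^{\mp}/h_3$, the seed $f_{-1}^-=h_3$, the $j_3$-iteration, the uniqueness argument (multiply a difference of candidates by $h_3$ or $h_3^2$ and land in $S_4(\Gamma_0(3))=\{0\}$ or in $S_6(\Gamma_0(3))=\mathbb{C}\,h_3^3$, where the $-$ case is killed because the product is $O(q^2)$ while $h_3^3=q+O(q^2)$), and the deduction of (c) from Proposition~\ref{prop:Tp} with $t=3$, $p^2\equiv 1\pmod 6$, together with \eqref{eq:T-q-series} and uniqueness. The paper leaves the uniqueness bookkeeping implicit, so your eigenspace accounting is a fine expansion of it.

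There is, however, a genuine error in your seed for the $+$ branch of part (b). You set $f_2^{+}:=F_3^{+}F_3^{-}/h_3^{3}$ by analogy with $f_3^{+}=F_2^{+}F_2^{-}/h_2^{3}$ at level $2$, but the analogy fails precisely because here $t=3$: since $h_3=q^{1/3}+\cdots$, the cube $h_3^{3}=\eta^{6}(z)\eta^{6}(3z)=q-6q^{2}+\cdots$ has \emph{integer} exponents, so $F_3^{+}F_3^{-}/h_3^{3}=q^{-1}+O(1)$ also has integer exponents and satisfies $f\big|T=f$ rather than $f\big|T=\zeta_3 f$. It is therefore not of the form $q^{-2/3}+\sum_{n\equiv 1\bmod 3}a(n)q^{n/3}$ and does not lie in $M_2^{!}(\Gamma^{*}(3),\nu_+)$; indeed $q^{-1}=q^{-3/3}$ corresponds to $d=3\equiv 0\pmod 3$, a residue class the grid does not contain. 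Since every $f_d^{+}$ with $d\equiv 2\pmod 3$ is built by $j_3$-iteration from this seed, the whole $+$ branch of part (b) — and hence the case $p^{n}\equiv 2\pmod 3$ of the first identity in (c) — collapses without a correct $f_2^{+}$. The paper's construction is $f_2^{+}:=F_3^{-}G_3^{-}/h_3^{2}=q^{-2/3}-14q^{1/3}-\cdots$, where $G_3^{-}$ is the weight-$2$ Eisenstein series on $\Gamma^{*}(3)$ with $W_3$-eigenvalue $-1$ and expansion $1+12q+36q^{2}+\cdots$ (i.e.\ $\tfrac12\left(3E_2(3z)-E_2(z)\right)$; note that $E_2(3z)-E_2(z)$ alone is not modular). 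This numerator has weight $6$, so dividing by $h_3^{2}=q^{2/3}+\cdots$ gives weight $2$, multiplier $\nu_+$, and the required principal part. Your construction of $f_2^{-}$ by correcting $j_3\cdot h_3$ with a multiple of $h_3$ is fine, as is everything else.
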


\begin{proof}
Let $\omega_-$ be the multiplier which maps $W_3$ to $-1$, and 
define
\[
	G_3^-(z) := \tfrac{1}{2} \left( E_2(3z) - E_2(z) \right) = 1+12 q+36 q^2+12 q^3+84 q^4+ \cdots \in M_2(\Gamma^*(3), \omega_-).
\]
The four grids are constructed beginning with the forms
\begin{gather*}
	f_1^+ := \frac{F_3^-}{h_3} = q^{-\frac{1}{3}} - 28q^{\frac{2}{3}} - 325q^{\frac{5}{3}} - 1248q^{\frac{8}{3}} - 5016q^{\frac{11}{3}} + \cdots \in M_2^!(\Gamma^*(3), \bar{\nu}_+), \\
	f_1^- := \frac{F_3^+}{h_3} = q^{-\frac{1}{3}} + 26q^{\frac{2}{3}} + 269q^{\frac{5}{3}} + 1452q^{\frac{8}{3}} + 4920q^{\frac{11}{3}} + \cdots \in M_2^!(\Gamma^*(3), \bar{\nu}_-), \\
	f_2^+ := \frac{F_3^- G_3^-}{h_3^2} =  q^{-\frac{2}{3}} - 14q^{\frac{1}{3}} - 652q^{\frac{4}{3}} - 7462q^{\frac{7}{3}} - 47525q^{\frac{10}{3}} + \cdots \in M_2^!(\Gamma^*(3), \nu_+), \\
	f_{-1}^- := h_3 = q^{\frac{1}{3}}-2 q^{\frac{4}{3}}-q^{\frac{7}{3}}+5 q^{\frac{13}{3}} + 4q^{\frac{16}{3}} + \cdots \in M_2^!(\Gamma^*(3), \nu_-).
\end{gather*}
The remaining forms $f_d^\pm$ are constructed using the Hauptmodul $j_3(z)$ on $\Gamma^*(3)$ given by
\begin{equation*}
	j_3(z) = \frac{\eta^{12}(z)}{\eta^{12}(3z)} + 12 + 3^6 \frac{\eta^{12}(3z)}{\eta^{12}(z)} = q^{-1} + 783q + 8672q^2 + \cdots \in M_0^!(\Gamma^*(3)).
	\qedhere
\end{equation*}
\end{proof}

\begin{example}
We have
\begin{align*}
	f_{1}^+ &= q^{-\frac{1}{3}}-28 q^{\frac{2}{3}}-325 q^{\frac{5}{3}}-1248 q^{\frac{8}{3}}-5016
q^{\frac{11}{3}}+\cdots\\
	f_{4}^+ &= q^{-\frac{4}{3}}-326 q^{\frac{2}{3}}-23600 q^{\frac{5}{3}}-471884 q^{\frac{8}{3}}-5409712
q^{\frac{11}{3}}+\cdots\\
	f_{7}^+ &= q^{-\frac{7}{3}}-2132 q^{\frac{2}{3}}-513250 q^{\frac{5}{3}}-25773728
q^{\frac{8}{3}}-636531533 q^{\frac{11}{3}}+\cdots\\
	f_{10}^+ &= q^{-\frac{10}{3}}-9505 q^{\frac{2}{3}}-6467264 q^{\frac{5}{3}}-677506240
q^{\frac{8}{3}}-30773378240 q^{\frac{11}{3}}+\cdots
\end{align*}
and
\begin{align*}
	f_{2}^+ &= q^{-\frac{2}{3}}-14 q^{\frac{1}{3}}-652 q^{\frac{4}{3}}-7462 q^{\frac{7}{3}}-47525
q^{\frac{10}{3}}+\cdots\\
	f_{5}^+ &= q^{-\frac{5}{3}}-65 q^{\frac{1}{3}}-18880 q^{\frac{4}{3}}-718550
q^{\frac{7}{3}}-12934528 q^{\frac{10}{3}}+\cdots\\
	f_{8}^+ &= q^{-\frac{8}{3}}-156 q^{\frac{1}{3}}-235942 q^{\frac{4}{3}}-22552012
q^{\frac{7}{3}}-846882800 q^{\frac{10}{3}}+\cdots\\
	f_{11}^+ &= q^{-\frac{11}{3}}-456 q^{\frac{1}{3}}-1967168 q^{\frac{4}{3}}-405065521
q^{\frac{7}{3}}-27975798400 q^{\frac{10}{3}}+\cdots,
\end{align*}
as well as
\begin{align*}
	f_{1}^- &= q^{-\frac{1}{3}}+26 q^{\frac{2}{3}}+269 q^{\frac{5}{3}}+1452 q^{\frac{8}{3}}+4920 q^{\frac{11}{3}}+\cdots\\
	f_{4}^- &= q^{-\frac{4}{3}}+376 q^{\frac{2}{3}}+23488 q^{\frac{5}{3}}+468634 q^{\frac{8}{3}}+5427008
q^{\frac{11}{3}}+\cdots\\
	f_{7}^- &= q^{-\frac{7}{3}}+2026 q^{\frac{2}{3}}+516638 q^{\frac{5}{3}}+25767436
q^{\frac{8}{3}}+636345829 q^{\frac{11}{3}}+\cdots\\
	f_{10}^- &= q^{-\frac{10}{3}}+9449 q^{\frac{2}{3}}+6456448 q^{\frac{5}{3}}+677710592
q^{\frac{8}{3}}+30773024128 q^{\frac{11}{3}}+\cdots
\end{align*}
and
\begin{align*}
	f_{-1}^- &= q^{\frac{1}{3}}-2 q^{\frac{4}{3}}-q^{\frac{7}{3}}+5 q^{\frac{13}{3}} + 4q^{\frac{16}{3}}+\cdots\\
	f_{2}^- &= q^{-\frac{2}{3}}+778 q^{\frac{4}{3}}+7104 q^{\frac{7}{3}}+47245 q^{\frac{10}{3}}+232128
q^{\frac{13}{3}}+\cdots\\
	f_{5}^- &= q^{-\frac{5}{3}}+18898 q^{\frac{4}{3}}+723347 q^{\frac{7}{3}}+12912896
q^{\frac{10}{3}}+152125263 q^{\frac{13}{3}}+\cdots\\
	f_{8}^- &= q^{-\frac{8}{3}}+234680 q^{\frac{4}{3}}+22546688 q^{\frac{7}{3}}+847138240
q^{\frac{10}{3}}+18799619328 q^{\frac{13}{3}}+\cdots.
\end{align*}
\end{example}


\section{Hecke grids on $\Gamma^*(4)$} \label{sec:level-4}
The three-dimensional space $M_4(\Gamma_0(4))$ is spanned by $\{E_4(2z), F_4^+(z), F_4^-(z)\}$, where
\begin{gather*}
	F_4^+(z) := \tfrac{1}{15}(16E_4(4z) + E_4(z) - 2E_4(2z)), \\
	F_4^-(z) := \tfrac{1}{15}(16E_4(4z) - E_4(z)).
\end{gather*}
The forms $E_4(2z)$ and $F_4^+(z)$ have eigenvalue $+1$ under the Fricke involution $W_4$, while the form $F_4^-$ has eigenvalue $-1$. Let
\[
	h_4(z) := \eta^4(2z) = q^{\frac{1}{3}} -4 q^{\frac{7}{3}}+2 q^{\frac{13}{3}}+8 q^{\frac{19}{3}}-5 q^{\frac{25}{3}} + \cdots.
\]
We construct grids on $\Gamma^*(4)$ starting with forms $f(z)/h_4$, where $f(z) \in M_4(\Gamma_0(4))$.

Recall that $E_4(z)/\eta^4(z)$ is the first member of one of the $\Gamma^*(1)$ grids. So we need concern ourselves only with the subspace spanned by $\{F_4^+, F_4^-\}$.
The distinguishing feature of  $F_4^+$ is the fact that it vanishes to order 2 at the cusp $1/2$.

Let $\nu_\pm$ denote the multiplier system for $\eta^4(2z)$ on $\Gamma_0(4)$, extended to $\Gamma^*(4)$ by $\nu_\pm(W_4)=\pm 1$. If $\gamma = \pmatrix abcd \in \Gamma_0(4)$, then by applying \eqref{eq:mult-eta-4} to the matrix $\pmatrix{a}{2b}{c/2}{d} = A_2 \gamma A_2^{-1}$ we obtain
\begin{equation}	\label{def-mult-4}
	\nu_\pm(\gamma) = \zeta_3^{bd(1-(c/2)^2) + \frac c4 (a+d)}.
\end{equation}
Note that $\nu_\pm$ is trivial on $\Gamma_0(12,3)$.
Since  $\eta^4(2z)\big|_2W_4=-\eta^4(2z)$, we have $h_4 \in S_2(\Gamma^*(4), \nu_-)$.

\begin{theorem} \label{level4thm}
\begin{enumerate}[\textup{(}a\textup{)}]
\item If $d>0$ and $d\equiv 1\pmod3$, then there exist unique forms
\begin{equation}
	f_d^{+}=q^{-d/3}+\sum_{\substack{n>0 \\ n\equiv 2\bmod3}} a_{d}^+(n)q^{n/3}\in  M_2^{!}(\Gamma^*(4), \bar{\nu}_+).
\end{equation}
Furthermore, there exist unique forms
\begin{equation} \label{eq:level-4-f-d-vanish}
	f_d^{-}=q^{-d/3}+\sum_{\substack{n>0 \\ n\equiv 2\bmod3}} a_{d}^-(n)q^{n/3}\in  M_2^{!}(\Gamma^*(4), \bar{\nu}_-)
\end{equation}
which vanish at the cusp $1/2$.
\item If $0<d\equiv 2\pmod3$, then there exist unique forms
\begin{equation}
	f_d^{+}=q^{-d/3}+\sum_{\substack{n>0 \\ n\equiv 1\bmod3}} a_{d}^+(n)q^{n/3}\in M_2^{!}(\Gamma^*(4), \nu_+)
\end{equation}
and
\begin{equation}
	f_d^{-}=q^{-d/3}+\sum_{\substack{n\geq4 \\ n\equiv 1\bmod3}}a_{d}^-(n)q^{n/3}\in M_2^{!}(\Gamma^*(4), \nu_-).
\end{equation}
\item Suppose $p\geq 5$ is prime. We have
\begin{equation*}
	f_1^+\big|T^{(3)}(p^n)=p^n f_{p^n}^+.
\end{equation*}
If $p^n\equiv 1\pmod 3$ then 
\begin{equation*}
	f_1^-\big|T^{(3)}(p^n)=p^n f_{p^n}^-.
\end{equation*}
If $p^n\equiv 2\pmod 3$ then
\begin{equation*}
	f_1^-\big|T^{(3)}(p^n)=p^nf_{p^n}^-+a_1^-(p^n)\cdot h_4.
\end{equation*}
\end{enumerate}
\end{theorem}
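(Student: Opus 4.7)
The plan is to follow the template of Theorems \ref{thm:level1}, \ref{level2thm}, and \ref{level3thm}, adapted to the richer situation on $\Gamma^*(4)$, which has two cusps ($\infty$ and $1/2$). I would first exhibit explicit base forms, then generate the remaining grid members inductively by multiplying by the Hauptmodul $j_4$ on $\Gamma^*(4)$ and subtracting linear combinations of previously constructed forms to match the required principal part at $\infty$.

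For the base forms I would set
\[
	f_1^+ := F_4^-(z)/h_4(z), \qquad f_1^- := F_4^+(z)/h_4(z), \qquad f_{-1}^- := h_4(z),
\]
and construct $f_2^+ \in M_2^!(\Gamma^*(4), \nu_+)$ with leading term $q^{-2/3}$ by a level-$4$ analog of $F_3^- G_3^-/h_3^2$, namely $F_4^- G_4^-/h_4^2$ where $G_4^-$ is a weight-$2$ $W_4$-eigenform with eigenvalue $-1$ built from $E_2(z), E_2(2z), E_2(4z)$. Since $F_4^+$ vanishes to order $2$ at $1/2$ while $h_4 = \eta^4(2z)$ has smaller vanishing order there, the form $f_1^-$ vanishes at $1/2$ as required in \eqref{eq:level-4-f-d-vanish}.

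The uniqueness arguments are the crux. Given two candidates $f_d^\pm$ and $g_d^\pm$, the difference has trivial principal part at $\infty$ (by the prescribed $q$-expansion shape), and after multiplication by $h_4$ lies in $M_4(\Gamma^*(4))$ with a definite $W_4$-eigenvalue and vanishing at $\infty$. In the $W_4 = -1$ eigenspace (one-dimensional, spanned by $F_4^-$, which is nonvanishing at $\infty$), this forces the difference to be zero, immediately yielding uniqueness for $f_d^+$ with $d \equiv 1 \pmod 3$. In the $W_4 = +1$ eigenspace (two-dimensional, spanned by $E_4(2z)$ and $F_4^+$), the subspace vanishing at $\infty$ is one-dimensional, spanned by $F_4^+ - E_4(2z)$; this element does not vanish at $1/2$ (since $E_4(2z)$ does not, while $F_4^+$ does), so the extra vanishing condition in \eqref{eq:level-4-f-d-vanish} eliminates it, yielding uniqueness for $f_d^-$. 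The cases $d \equiv 2 \pmod 3$ are handled identically, using an auxiliary eta quotient of weight $2$ in place of $h_4$.

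Part (c) is then an application of Proposition \ref{prop:Tp} with $N = 4$ and $t = 3$: equation \eqref{def-mult-4} shows that $\nu_\pm$ takes values in sixth roots of unity and is trivial on $\Gamma_0(12,3)$, and for $p \geq 5$ we have $p^2 \equiv 1 \pmod 6$, so $f_1^\pm \big| T^{(3)}(p^n)$ lies in the appropriate space. The $q$-expansion formula \eqref{eq:T-q-series} then matches its principal part with $p^n f_{p^n}^\pm$ (with an extra $a_1^-(p^n)\cdot h_4$ correction when $p^n \equiv 2 \pmod 3$ in the $\nu_-$ case, arising exactly as in Theorem \ref{level3thm}), and uniqueness from (a) and (b) finishes the identification. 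I expect the main obstacle to be the uniqueness argument in the $+$ eigenspace: this is the genuinely new feature distinguishing the level-$4$ case from the earlier levels, and is precisely the reason for the auxiliary vanishing condition at $1/2$ in the statement of (a).
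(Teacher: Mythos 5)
Your overall architecture matches the paper's: the same four seed forms $F_4^-/h_4$, $F_4^+/h_4$, $F_4^-G_4^-/h_4^2$ and $h_4$ (with $G_4^-=\tfrac13(4E_2(4z)-E_2(z))$), the inductive construction via the Hauptmodul $j_4$, and part (c) via Proposition~\ref{prop:Tp} with $t=3$ together with \eqref{eq:T-q-series} and uniqueness. Your uniqueness argument for $f_d^-$ with $d\equiv 1\pmod 3$ is a mild variant of the paper's: the paper invokes the valence formula (a nonzero weight-$4$ form on $\Gamma_0(4)$ has exactly two zeros, while $h_4(f_d^--g_d^-)$ would have at least three), whereas you decompose $M_4(\Gamma_0(4))$ into $W_4$-eigenspaces and check that the $+1$-eigenspace contains no nonzero form vanishing at both $\infty$ and $1/2$. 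These are equivalent, and you correctly identify why the vanishing condition at $1/2$ must be imposed in \eqref{eq:level-4-f-d-vanish}.

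The gap is in the step ``after multiplication by $h_4$ [the difference] lies in $M_4(\Gamma^*(4))$.'' For $f_d^+$ (and for both families with $d\equiv 2\pmod3$) no condition at the cusp $1/2$ is imposed, and membership in $M_2^!(\Gamma^*(4),\bar{\nu}_+)$ permits poles there. For instance, $u\cdot f_1^+$ with $u=\Delta(z)\Delta(4z)/\Delta(2z)^2\in M_0^!(\Gamma^*(4))$ equals $q^{2/3}+\cdots$ with exponents $n/3$, $n\equiv 2\pmod 3$, lies in $M_2^!(\Gamma^*(4),\bar{\nu}_+)$, and has a pole at $1/2$ (where $u$ has a double pole and $f_1^+$ vanishes only to order $2/3$). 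So holomorphy of $h_4\cdot(f_d^+-g_d^+)$ at $1/2$ does not follow from the stated hypotheses: some control at $1/2$ must either be built into the characterization of $f_d^+$ or verified for the two specific forms being compared (the constructed $f_d^+$ and the Hecke images $f_1^+\big|T^{(3)}(p^n)$ are indeed holomorphic at $1/2$, which is all that part (c) requires). The paper's proof is silent on uniqueness for these families, so you are not contradicting it, but your write-up asserts the holomorphy rather than establishing it. A smaller point: the $d\equiv2\pmod3$ cases are not ``handled identically'' by multiplying by a weight-$2$ eta quotient --- there the difference has exponents in $\tfrac13+\Z$ and multiplier $\nu_\pm$, so to reach a form with trivial multiplier and integral exponents you should multiply by $h_4^2$ (landing in weight $6$) or divide by $h_4$ (landing in weight $0$), and the dimension count changes accordingly.
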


\begin{proof}
Let
\[
	G_4^-(z) := \tfrac{1}{3}\left( 4E_2(4z) - E_2(z) \right) = 1+8 q+24 q^2+32 q^3+24 q^4+ \cdots \in M_2(\Gamma^*(2), \omega_-)
\]
The four grids are constructed beginning with the forms
\begin{gather*}
	f_1^+ := \frac{F_4^-}{h_4} = q^{-\frac{1}{3}}-16 q^{\frac{2}{3}}-140 q^{\frac{5}{3}}-512 q^{\frac{8}{3}} - 1474q^{\frac{11}{3}} + \cdots \in M_2^!(\Gamma^*(4), \bar{\nu}_+), \\
	f_1^- := \frac{F_4^+}{h_4} = q^{-\frac{1}{3}}+16 q^{\frac{2}{3}}+116 q^{\frac{5}{3}}+512 q^{\frac{8}{3}}+1598 q^{\frac{11}{3}} + \cdots \in M_2^!(\Gamma^*(4), \bar{\nu}_-), \\
	f_2^+ := \frac{F_4^- G_4^-}{h_4^2} = q^{-\frac{2}{3}}+ 8q^{\frac{1}{3}} - 240 q^{\frac{4}{3}} - 2016q^{\frac{7}{3}} - 10380q^{\frac{10}{3}} + \cdots \in M_2^!(\Gamma^*(4), \nu_+), \\
	f_{-1}^- := h_4 = q^{\frac{1}{3}} -4 q^{\frac{7}{3}}+2 q^{\frac{13}{3}}+8 q^{\frac{19}{3}}-5 q^{\frac{25}{3}} + \cdots \in M_2^!(\Gamma^*(4), \nu_-).
\end{gather*}
The remaining forms $f_d^\pm$ are constructed using the Hauptmodul $j_4(z)$ on $\Gamma^*(4)$ given by
\begin{equation*}
	j_4(z) := \frac{\eta^8(z)}{\eta^8(4z)} + 8 + \frac{\eta^8(4z)}{\eta^8(z)} = \frac{1}{q}+276 q+2048 q^2 + \cdots \in M_0^!(\Gamma^*(4)).
\end{equation*}
For $d\equiv 1\pmod{3}$, the forms $f_d^-$ are constructed so that they vanish at $1/2$. This property is necessary to establish uniqueness, for if $f_d^-$ and $g_d^-$ satisfy  \eqref{eq:level-4-f-d-vanish} then
\[
	h_4 \cdot (f_d^- - g_d^-)=O(q)
\]
vanishes at $\infty$ and vanishes to order $2$ at $1/2$. But nonzero weight $4$ forms on $\Gamma_0(4)$ can have at most $2$ zeros, so $f_d^-=g_d^-$.
\end{proof}

\begin{example}
We have
\begin{align*}
	f_{1}^+ &= q^{-\frac{1}{3}}-16 q^{\frac{2}{3}}-140 q^{\frac{5}{3}}-512 q^{\frac{8}{3}}-1474
q^{\frac{11}{3}}+\cdots\\
	f_{4}^+ &= q^{-\frac{4}{3}}-120 q^{\frac{2}{3}}-5120 q^{\frac{5}{3}}-69872 q^{\frac{8}{3}}-585728
q^{\frac{11}{3}}+\cdots\\
	f_{7}^+ &= q^{-\frac{7}{3}}-576 q^{\frac{2}{3}}-69950 q^{\frac{5}{3}}-2115584 q^{\frac{8}{3}}-34400960
q^{\frac{11}{3}}+\cdots\\
	f_{10}^+ &= q^{-\frac{10}{3}}-2076 q^{\frac{2}{3}}-606208 q^{\frac{5}{3}}-34664448
q^{\frac{8}{3}}-955187200 q^{\frac{11}{3}}+\cdots
\end{align*}
and
\begin{align*}
	f_{2}^+ &= q^{-\frac{2}{3}}-8 q^{\frac{1}{3}}-240 q^{\frac{4}{3}}-2016 q^{\frac{7}{3}}-10380
q^{\frac{10}{3}}+\cdots\\
	f_{5}^+ &= q^{-\frac{5}{3}}-28 q^{\frac{1}{3}}-4096 q^{\frac{4}{3}}-97930 q^{\frac{7}{3}}-1212416
q^{\frac{10}{3}}+\cdots\\
	f_{8}^+ &= q^{-\frac{8}{3}}-64 q^{\frac{1}{3}}-34936 q^{\frac{4}{3}}-1851136
q^{\frac{7}{3}}-43330560 q^{\frac{10}{3}}+\cdots\\
	f_{11}^+ &= q^{-\frac{11}{3}}-134 q^{\frac{1}{3}}-212992 q^{\frac{4}{3}}-21891520
q^{\frac{7}{3}}-868352000 q^{\frac{10}{3}}+\cdots,
\end{align*}
as well as
\begin{align*}
	f_{1}^- &= q^{-\frac{1}{3}} + 16 q^{\frac{2}{3}}+116 q^{\frac{5}{3}}+512 q^{\frac{8}{3}}+1598
q^{\frac{11}{3}}+\cdots\\
	f_{4}^- &= q^{-\frac{4}{3}} + 136 q^{\frac{2}{3}}+5120 q^{\frac{5}{3}}+69392 q^{\frac{8}{3}}+585728
q^{\frac{11}{3}}+\cdots\\
	f_{7}^- &= q^{-\frac{7}{3}} + 576 q^{\frac{2}{3}}+70338 q^{\frac{5}{3}}+2115584 q^{\frac{8}{3}}+34391360
q^{\frac{11}{3}}+\cdots\\
	f_{10}^- &= q^{-\frac{10}{3}} + 2020 q^{\frac{2}{3}}+606208 q^{\frac{5}{3}}+34672640
q^{\frac{8}{3}}+955187200 q^{\frac{11}{3}}+\cdots
\end{align*}
and
\begin{align*}
	f_{-1}^- &= q^{\frac{1}{3}}-4 q^{\frac{7}{3}}+2 q^{\frac{13}{3}}+8 q^{\frac{19}{3}}-5 q^{\frac{25}{3}}-4 q^{\frac{31}{3}}+\cdots\\
	f_{2}^- &= q^{-\frac{2}{3}} + 272 q^{\frac{4}{3}}+2048 q^{\frac{7}{3}}+10100 q^{\frac{10}{3}}+40960
q^{\frac{13}{3}}+\cdots\\
	f_{5}^- &= q^{-\frac{5}{3}} + 4096 q^{\frac{4}{3}}+98566 q^{\frac{7}{3}}+1212416 q^{\frac{10}{3}}+10351552
q^{\frac{13}{3}}+\cdots\\
	f_{8}^- &= q^{-\frac{8}{3}} + 34696 q^{\frac{4}{3}}+1851392 q^{\frac{7}{3}}+43340800
q^{\frac{10}{3}}+641007616 q^{\frac{13}{3}}+\cdots.
\end{align*}
\end{example}


\bibliographystyle{plain}
\bibliography{bibliography}

\begin{thebibliography}{1}

\bibitem{Ahlgren:2012}
Scott Ahlgren.
\newblock Hecke relations for traces of singular moduli.
\newblock {\em Bull. Lond. Math. Soc.}, 44(1):99--105, 2012.

\bibitem{Garthwaite}
Sharon~Anne Garthwaite.
\newblock Convolution congruences for the partition function.
\newblock {\em Proc. Amer. Math. Soc.}, 135(1):13--20, 2007.

\bibitem{Guerzhoy}
P.~Guerzhoy.
\newblock On the {H}onda-{K}aneko congruences.
\newblock In {\em From {F}ourier analysis and number theory to radon transforms
  and geometry}, volume~28 of {\em Dev. Math.}, pages 293--302. Springer, New
  York, 2013.

\bibitem{HK}
Yutaro Honda and Masanobu Kaneko.
\newblock On {F}ourier coefficients of some meromorphic modular forms.
\newblock {\em Bull. Korean Math. Soc.}, 49(6):1349--1357, 2012.

\bibitem{Knopp}
Marvin~I. Knopp.
\newblock {\em Modular functions in analytic number theory}.
\newblock Markham Publishing Co., Chicago, Ill., 1970.

\bibitem{Kohler}
G{{\"u}}nter K{{\"o}}hler.
\newblock {\em Eta products and theta series identities}.
\newblock Springer Monographs in Mathematics. Springer, Heidelberg, 2011.

\bibitem{Zagier:2002}
Don Zagier.
\newblock Traces of singular moduli.
\newblock In {\em Motives, polylogarithms and {H}odge theory, {P}art {I}
  ({I}rvine, {CA}, 1998)}, volume~3 of {\em Int. Press Lect. Ser.}, pages
  211--244. Int. Press, Somerville, MA, 2002.

\end{thebibliography}
\end{document}